\numberwithin{equation}{section}
\newtheorem{proposition}{Proposition}[section]
\theoremstyle{definition}
\newtheorem{remark}{Remark}[section]
\newcommand{\norm}[1]{\mathopen\Vert#1\mathclose\Vert}
\DeclareMathOperator{\dist}{dist}
\newcommand{\N}{{\mathbb N}}
\newcommand{\R}{{\mathbb R}}
\newcommand{\eps}{\varepsilon}
\renewcommand{\epsilon}{\eps}
\title[Critical points whose polarization is a critical point]{Finding critical points whose \\ polarization is also a critical point}
\author{Marco Squassina}
\address{Universit\`a degli Studi di Verona \\ Dipartimento di Informatica \newline C\`a Vignal 2\\ Strada Le Grazie 15\\
I-37134 Verona\\ Italy}
\email{marco.squassina@univr.it}
\author{Jean Van Schaftingen}
\address{Universit\'e catholique de Louvain\\
Institut de Recherche en Math\'ematique et Physique (IRMP)\newline
Chemin du Cyclotron 2 bte L7.01.01\\
1348 Louvain-la-Neuve\\
Belgium}
\email{Jean.VanSchaftingen@uclouvain.be}
\subjclass[2010]{49J35 (35B07, 35J20)}
\keywords{Symmetry of solutions of semi-linear elliptic PDEs; mountain pass lemma; 
general minimax principle; symmetrization; polarization; non-smooth critical point theory.} 
\begin{document}

\begin{abstract}
We show that near any given minimizing sequence of paths for the mountain pass lemma, 
there exists a critical point whose polarization is also a critical point. This is 
motivated by the fact that if any polarization of a critical point is also a 
critical point and the Euler-Lagrange equation is a second-order semi-linear 
elliptic problem, T.\thinspace Bartsch, T.\thinspace Weth and M.\thinspace Willem 
(J. Anal. Math., 2005) have proved that the critical point is axially symmetric.
\end{abstract}

\maketitle


\section{Introduction}
\noindent
If \(u : \Omega \to \R\) solves the semi-linear elliptic problem
\begin{equation}
\label{problem}
\begin{cases}
 - \Delta u   = f (x, u)  &  \text{in \(\Omega\)},\\
 \,\,\, u  = 0 &  \text{on \(\partial \Omega\)},
\end{cases}
\end{equation}
one is interested in determining whether \(u\) inherits some symmetry of 
the domain \(\Omega \subset \R^N\) and of the nonlinearity \(f\). For example 
if \(\Omega\) and \(f\) are invariant under rotations, is \(u\) also invariant?
Of course, when \(u\) is the only solution of \eqref{problem}, the answer is positive.
By observing the eigenfunctions of the Laplacian, one can see that this is not always the case. 
B.\thinspace Gidas, W.-M.\thinspace Ni and L.\thinspace Nirenberg have 
proved that if \(\Omega\) is a ball, \(f\) is independent of \(x\) 
and Lipschitz-continuous and \(u\) is positive, then \(u\) is radially symmetric \cite{GidasNiNirenberg1979}. 
The main tool in the proof is the maximum principle for second order elliptic operators.
One can try to replace the essential positivity assumption by some other assumption. O.\thinspace Lopes 
has proved that if the solution \(u\) is a minimizer under a constraint, if \(\Omega\) is bounded and smooth and \(f\) is smooth 
enough, then \(u\) is radially symmetric \cite{Lopes1996}. His proof relies on a unique continuation principle.
Another family of methods is based on the symmetrization by rearrangement. The first 
idea is to associate to any nonnegative measurable function \(u : \Omega \to \R\) its 
Schwarz symmetrization \(u^*\) which is a radial function such that the corresponding sub-level 
sets have the same measure as those of \(u\); under this transformations, the \(L^2\)-norm 
of the gradient decreases \citelist{\cite{PolyaSzego}\cite{Kawohl1985}}. In particular, it 
is possible to show that many functionals of the calculus of variations decrease under symmetrization, and therefore that 
if \(u\) is a solution of some variational problem, then \(u^*\) is also a solution. However 
this does not imply that \(u\) itself is symmetric. One way to show that \(u\) is symmetric is 
to study the equality cases of symmetrization inequalities \cite{BrothersZiemer1988}; this 
approach is however limited by some stringent assumptions to apply the results.
In order to study partial symmetry, T.\thinspace Bartsch, T.\thinspace Weth and M.\thinspace Willem, 
have introduced a nice method which mixes a variational argument with the maximum 
principle \cite{BartschWethWillem} (see also \cite{VanSchaftingenWillem}). Given a 
closed half-space \(H \subset \R^N\), define \(\sigma_H\) to be the reflection with 
respect to \(\partial H\). If \(\sigma_H(\Omega) = \Omega\), define for \(u : \Omega \to \R\) its polarization \(u^H : \Omega \to \R\) by
\[
 u^H =
\begin{cases}
 \max \{u, u \circ \sigma_H\} & \text{on \(H\)},\\
 \min \{u, u \circ \sigma_H\} & \text{on \(\R^N \setminus H\)}.
\end{cases}
\]
Now assume that \(u\) is a minimizer of some functional. Then, if the functional does not 
increase under polarization with respect to \(H\), it follows that \(u^H\) is a minimizer too. Since symmetrization 
can be approximated by rearrangement \cite{BrockSolynin2000} (see also \cite{VanSchaftingen2009}), this is stronger than requiring that the 
functional does not increase under symmetrization. The new ingredient that T.\thinspace Bartsch, T.\thinspace 
Weth and M.\thinspace Willem introduce is that if \(\Omega\) is a ball and \(u^H\) is also a solution for every 
half-space \(H\) such that \(\sigma_H(\Omega) = \Omega\), then \(u\) is axially symmetric.
The method applies to minimizers under constraints and in particular to least energy solutions and least 
energy nodal solutions of semi-linear equations \cite{BartschWethWillem}*{Theorem 3.2}.
It would be nice to extend such results to critical points that are not minimizers under a constraint. 
One way to construct such critical points is to rely on the Mountain Pass lemma of A. Ambrosetti and P. Rabinowitz \cite{AmbrosettiRabinowitz}.
Given a functional \(\varphi \in C^1(H^1_0(\Omega))\) such that \(0\) is a local minimum of \(\varphi\), set
\[
  \Gamma=\{ \gamma \in C([0, 1], H^1_0(\Omega)):\, \gamma(0) = 0 \text{ and } \varphi(\gamma(1)) < 0\},
\]
and
\[
  c=\inf_{\gamma \in \Gamma} \sup_{t\in [0,1]} \varphi (\gamma(t)).
\]
Assume also that \(\varphi\) satisfies the Palais-Smale condition, that is, 
if \((u_n)_{n \in \N}\) is a sequence in \(H^1_0(\Omega)\) such 
that \((\varphi(u_n))_{n \in \N}\) converges and \(\varphi'(u_n) \to 0\) 
as \(n \to \infty\) in $H^{-1}(\Omega)$, then \((u_n)_{n \in \N}\) converges, up to a subsequence.
Then there exists \(u \in H^1_0(\Omega)\) such that \(\varphi' (u) = 0\) and \(\varphi (u) = c\).
If, in addition, \(\Omega\) is a ball and for {\em every} closed half-space \(H \subset \R^N\) such 
that \(\sigma_H(\Omega) = \Omega\) and \(u \in H^1_0(\Omega)\), \(\varphi (u^H) \le \varphi (u)\), then 
there exists \(u \in H^1_0(\Omega)\) such that \(\varphi' (u) = 0\), \(\varphi (u) = c\) and \(u\) is axially 
symmetric \cite{VanSchaftingen2005}. In general, it is not difficult to prescribe symmetry to solutions. The remarkable 
feature of this result is that \(u\) is a critical point at a critical level without any symmetry constraint. This result 
was extended to critical levels defined with the Krasnoselskiii genus \cite{VanSchaftingen2006} and to non-smooth critical 
point theory \cite{Squassina2011-1} (see also \citelist{\cite{Squassina2011}\cite{Squassina2012}}).
We would like to know when all the solutions obtained by the Mountain Pass lemma are symmetric. 
To this regard, we recall that the Mountain Pass value $c$ often coincides with the least energy value and for instance, in \cite{BJMar}, for a quite general 
class of autonomous functionals, the authors have recently proved that {\em any} least energy solution is radially symmetric and with fixed sign.
We also point out that symmetry results under assumptions on the Morse index and somewhat restrictive 
assumptions on the nonlinearity have been obtained in \citelist{\cite{PacellaWeth}\cite{GladialiPacellaWeth2010}}.
Going back to the minimax principle, we would like to apply the method of T.\thinspace Bartsch, T.\thinspace Weth 
and M.\thinspace Willem. The crucial step is to prove that if \(u\) is a critical point of $\varphi$ 
then \(u^H\) is also a critical point of $\varphi$. 
We could not prove this and we also think that this should not be true in general. 
However, we have something that we think to be the best result in that direction.

To state our result, recall that the critical points $u$ of the Mountain Pass 
lemma can be localized as follows: if \((\gamma_n)_{n \in \N}\) is a sequence 
of paths in \(\Gamma\) such that 
\begin{equation}
	\label{ip-cond}
\lim_{n  \to \infty} \sup_{t \in [0, 1]} \varphi(\gamma_n(t)) = c, 
\end{equation}
then, up to a subsequence, 
\begin{equation}
	\label{impl-adh}
\lim_{n \to \infty} \dist_{H^1}(u, \gamma_n([0, 1])) = 0.
\end{equation} 
If $\varphi$ does not increase under polarizations with respect to a fixed half-space $H$, 
based upon a new abstract minimax principle (Proposition~\ref{propositionAbstractSymmetricMinimax}), we 
prove that for any sequence \((\gamma_n)_{n \in \N}\) satisfying \eqref{ip-cond}, there exists a critical point \(u\) of $\varphi$ with $\varphi(u)=c$ such 
that, up to a subsequence, \eqref{impl-adh} holds and, {\em in addition}, \(u^H\) is also a critical point of $\varphi$ at the same level $c$ (Proposition~\ref{mpl}). 
This provides some kind of symmetry information of $u$ with respect to $H$, see e.g.\ \cite{BartschWethWillem}*{Theorem 2.6}
for the special situation regarding problem \eqref{problem}.
One can expect that in many cases, there is at most one critical point \(u\) such that 
$\dist_{H^1}(u, \gamma_n([0, 1])) \to 0$ as $n\to\infty$ and \(\varphi(u) = c\). In such a case we would have the desired property. 
Unfortunately, in general, the uniqueness of critical points at the 
level $c$ and near a family of paths seems quite difficult to establish. The result obtained also extends to continuous functionals
in the framework of the non-smooth critical point theory of \cites{DM,CDM}, by exploiting a suitable quantitative deformation theorem \cite{corvellec}.
\vskip2pt
The paper is organized as follows. In Section~\ref{sectionAbstract}, we 
prove a new quantitative abstract Minimax Principle. In Section~\ref{sectionMountain}, we apply 
this result in the specific case of the Mountain Pass lemma and the polarization.

\section{Shadowing Minimax Principle}

\label{sectionAbstract}

In this section we shall prove the following variant of the minimax 
principle in which two almost critical points related by a function \(\Psi\) are found at once.

\begin{proposition}
\label{propositionAbstractSymmetricMinimax}
Let $(X,\|\cdot\|)$ be a Banach space, \(M\) be a metric space and \(M_0 \subset M\). 
Let also consider \(\Gamma_0 \subset C(M_0, X)\) and define the set
\[
  \Gamma=\{ \gamma \in C(M, X): \gamma\vert_{M_0} \in \Gamma_0 \}
\]
If \(\varphi \in C^1(X, \R)\) 
satisfies
\[
  c=\inf_{\gamma \in \Gamma} \sup_{t\in M} \varphi (\gamma(t)) > \sup_{\gamma_0 \in \Gamma_0} \sup_{t \in M_0} \varphi(\gamma_0(t))=a,
\]
\(\Psi \in C(X, X)\) and 
\[
  \varphi \circ \Psi \le \varphi, \,\qquad \Psi(\Gamma)\subset\Gamma,
\]
then for every \(\epsilon \in ]0, \frac{c-a}{2}[\), \(\delta > 0\) and \(\gamma \in \Gamma\) such that 
\[
 \sup_{M} \varphi \circ \gamma \le c+\epsilon,
\]
there exist elements \(u, v, w \in X\) such that 
\begin{enumerate}[a)]
 \item[a.1)] \(c-2\epsilon \le \varphi(u) \le c+2\epsilon\),
\vskip2pt
 \item[a.2)] \(c-2\epsilon \le \varphi(v) \le c+2\epsilon\),
\vskip2pt
 \item[b.1)] \(\norm{u-w} \le 3\delta\),
\vskip2pt
 \item[b.2)] \(\dist_X(w, \gamma(M)) \le \delta\),
\vskip2pt
 \item[b.3)] \(\norm{v-\Psi(w)} \le 2\delta\), 
\vskip2pt
 \item[c.1)] \(\norm{\varphi'(u)}_{X'} < 8\epsilon/\delta\),
\vskip2pt
 \item[c.1)] \(\norm{\varphi'(v)}_{X'} < 8\epsilon/\delta\).
\end{enumerate}
\end{proposition}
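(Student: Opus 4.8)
The plan is to argue by contradiction. Suppose no triple $(u,v,w)$ with the stated properties exists, and set
\[
Z=\bigl\{x\in X:\ \varphi(x)\in[c-2\epsilon,c+2\epsilon]\ \text{and}\ \norm{\varphi'(x)}<8\epsilon/\delta\bigr\}.
\]
The failure of the conclusion means precisely that for every $w\in X$ with $\dist_X(w,\gamma(M))\le\delta$ one has $\dist_X(w,Z)\ge3\delta$ or $\dist_X(\Psi(w),Z)\ge2\delta$. In particular, putting $M_U=\{t\in M:\dist_X(\gamma(t),Z)\ge3\delta\}$ and $M_V=\{t\in M:\dist_X(\Psi(\gamma(t)),Z)\ge2\delta\}$, these are closed (because $\dist_X(\cdot,Z)$ and $\Psi$ are continuous) and $M=M_U\cup M_V$. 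Note also that $\norm{\varphi'}\ge8\epsilon/\delta$ on $\varphi^{-1}([c-2\epsilon,c+2\epsilon])\setminus Z$, so pseudo‑gradient vector fields and the quantitative deformations of the deformation lemma are available away from $Z$ on the band $\varphi^{-1}([c-2\epsilon,c+2\epsilon])$.

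First I would construct, by integrating a normalized pseudo‑gradient field for $\varphi$ cut off away from $Z$ and outside $\varphi^{-1}([c-2\epsilon,c+2\epsilon])$, two deformations of $X$ that are the identity on $\{\varphi<c-2\epsilon\}$: a map $\eta_1$, of displacement $\le\delta$, sending $\{\dist_X(\cdot,Z)\ge3\delta\}\cap\{\varphi\le c+\epsilon\}$ into $\{\varphi\le c-\epsilon\}$, and a map $\eta_2$, of displacement $\le\delta/2$, sending $\{\dist_X(\cdot,Z)\ge2\delta\}\cap\{\varphi\le c+\epsilon\}$ into $\{\varphi\le c-\epsilon\}$; the two margins $3\delta$ and $2\delta$ are adjusted to the displacement bounds and the threshold $8\epsilon/\delta$ so that each flow stays where $\norm{\varphi'}\ge8\epsilon/\delta$ long enough to drop $\varphi$ by $2\epsilon$. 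Since $M\setminus M_V$ and $M\setminus M_U$ are disjoint open subsets of $M$, I would then pick (Urysohn) a continuous $r:M\to[0,1]$ with $r\equiv1$ on $M\setminus M_V$ and $r\equiv0$ on $M\setminus M_U$, and define
\[
\beta(t)=\eta_2\bigl(\Psi\bigl(\eta_1^{\,r(t)}(\gamma(t))\bigr)\bigr),
\]
where $\eta_1^{\,s}$ is the partial deformation (the flow run for a fraction $s$ of its time). Then $\beta\in C(M,X)$, and since $\eta_1,\eta_2$ fix $\{\varphi<c-2\epsilon\}\supset\gamma(M_0)$ (recall $\epsilon<\tfrac{c-a}{2}$, so $\varphi\circ\gamma\le a<c-2\epsilon$ on $M_0$) and $\Psi\circ\gamma\in\Gamma$, one gets $\beta|_{M_0}=(\Psi\circ\gamma)|_{M_0}\in\Gamma_0$, so $\beta\in\Gamma$.

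Finally I would check $\sup_M\varphi\circ\beta<c$, which contradicts the definition of $c$. If $t\in M\setminus M_V$ then $r(t)=1$ and $\gamma(t)\in M_U$, so $\eta_1$ pushes $\varphi(\gamma(t))$ below $c-\epsilon$ and, using $\varphi\circ\Psi\le\varphi$, $\varphi(\beta(t))\le c-\epsilon$; if $t\in M\setminus M_U$ then $r(t)=0$ and $\gamma(t)\in M_V$, so $\Psi(\gamma(t))$ lies in the region handled by $\eta_2$ and again $\varphi(\beta(t))\le c-\epsilon$. The delicate case — and the main obstacle — is $t\in M_U\cap M_V$ with $0<r(t)<1$: $\gamma(t)$ has been pushed only partway, and because $\Psi$ is merely continuous there is no a priori control on $\dist_X(\Psi(\eta_1^{\,r(t)}(\gamma(t))),Z)$, so it is unclear that $\eta_2$ can finish. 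The device is to reapply the hypothesis at the displaced point $\gamma_1(t):=\eta_1^{\,r(t)}(\gamma(t))$: since $\norm{\gamma_1(t)-\gamma(t)}\le\delta$ we have $\dist_X(\gamma_1(t),\gamma(M))\le\delta$, hence either $\dist_X(\gamma_1(t),Z)\ge3\delta$ or $\dist_X(\Psi(\gamma_1(t)),Z)\ge2\delta$; in the second case $\eta_2$ does complete the descent (combining the partial decreases $\gtrsim2\epsilon\,r(t)$ and $\gtrsim2\epsilon(1-r(t))$), and in the first case $\gamma_1(t)$ again lies in the full‑effectiveness region of $\eta_1$, which forces the partial push to have already reached $\{\varphi\le c-\epsilon\}$ (so one tunes $r$ on the overlap, or iterates, to make this automatic). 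Carrying out this bookkeeping carefully — verifying that the composed flows never leave $\{\norm{\varphi'}\ge8\epsilon/\delta\}$ and that the two stages' decrements add up across the whole overlap — is where the real work lies, and is exactly what pins down the constants $3\delta$, $2\delta$, $\delta$ and $8\epsilon/\delta$ appearing in the statement.
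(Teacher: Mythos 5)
Your contradiction setup and the sets $M_U$, $M_V$ are fine, but the construction of the competitor path $\beta$ breaks down exactly where you flag it, and the patches you sketch do not close the hole. Whenever $M$ is connected and both $M\setminus M_U$ and $M\setminus M_V$ are nonempty (the relevant situation: $M=[0,1]$ in the application), continuity forces $r$ to take every value in $]0,1[$ somewhere in $M_U\cap M_V$. At such a $t$ the first flow has only run for a fraction $r(t)$ of its time, so all that is guaranteed is $\varphi(\gamma_1(t))\lesssim c+\epsilon-C\epsilon\, r(t)$; if moreover $\Psi(\gamma_1(t))$ happens to lie within $2\delta$ of $Z$, the cut\,--\,off in $\eta_2$ makes the second stage act trivially, and $\varphi(\beta(t))$ can exceed $c$ as soon as $r(t)$ is small. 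Your dichotomy at $\gamma_1(t)$ does not repair this: its first branch only says that $\gamma_1(t)$ is far from $Z$, i.e.\ that the flow \emph{could} continue to descend there, not that it \emph{has already} descended to $\{\varphi\le c-\epsilon\}$; and since $\Psi$ is merely continuous, with no uniform modulus on the (non-compact) $\delta$-neighbourhood of $\gamma(M)$, you cannot trade a smaller displacement of the first flow for control of $\dist_X(\Psi(\gamma_1(t)),Z)$ without losing the decrease of $\varphi$. ``Tuning $r$'' or iterating the two stages reproduces the same overlap problem at each step, so the argument is incomplete at its central point.

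The paper avoids interpolation altogether by applying Willem's quantitative deformation lemma (Proposition~\ref{willemma}) twice in succession, to whole paths, rather than splicing two partial flows. First one deforms $\gamma$ globally: taking $S$ to be the set of $w\in\gamma(M)$ all of whose $2\delta$-neighbours in the band $\varphi^{-1}([c-2\epsilon,c+2\epsilon])$ satisfy $\norm{\varphi'}_{X'}\ge 8\epsilon/\delta$, one obtains $\eta$ and $\tilde\gamma=\eta\circ\gamma\in\Gamma$, and the useful output is the implication ``if $\varphi(\tilde\gamma(t))>c-\epsilon$, then some almost critical $u$ lies within $2\delta$ of $\gamma(t)$''. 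Only then is $\Psi$ applied, to the \emph{fully deformed} path: setting $\hat\gamma=\Psi\circ\tilde\gamma\in\Gamma$, a second application of the lemma to $\hat\gamma(M)\cap\varphi^{-1}([c-\frac{\epsilon}{2},c+\epsilon])$ would otherwise push $\sup_M\varphi\circ(\hat\eta\circ\hat\gamma)$ below $c$, contradicting the definition of $c$; hence there is a $t$ with $\varphi(\hat\gamma(t))>c-\epsilon$ and an almost critical $v$ within $2\delta$ of $\hat\gamma(t)$. At that same $t$, since $\varphi(\tilde\gamma(t))\ge\varphi(\Psi(\tilde\gamma(t)))>c-\epsilon$, the first implication supplies $u$, and $w:=\tilde\gamma(t)$ satisfies b.1)--b.3) by the triangle inequality --- which is precisely why b.2) only asks $\dist_X(w,\gamma(M))\le\delta$ rather than $w\in\gamma(M)$. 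If you want to salvage your architecture, the lesson is to let $\Psi$ act after a complete deformation of the path and to derive the existence of the good parameter $t$ from a global contradiction with the minimax value, instead of trying to push $\varphi\circ\beta$ below $c$ pointwise.
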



\noindent
The proof relies on the following quantitative deformation lemma of M.\thinspace 
Willem \cite{WillemMinimax}*{Lemma 2.3}.

\begin{proposition}
	\label{willemma}
Let $(X,\|\cdot\|)$ be a Banach space, \(\varphi \in C^1(X)\), \(S \subseteq X\), \(c \in \R\), \(\epsilon > 0\) and \(\delta > 0\). Assume that 
for every  \(u \in \varphi^{-1}([c-2\epsilon, c+2\epsilon])\) such that \(B_{2\delta} (u) \cap S \ne \emptyset\) it holds
\[
  \norm{\varphi'(u)}_{X'} \ge \frac{8\epsilon}{\delta}.
\]
Then there exists a homeomorphism \(\eta : X \to X\) such that $\varphi\circ\eta\leq\varphi$ and
\begin{enumerate}[(i)]
 \item \(\eta(u)=u\) if \(\varphi(u) \not \in [c-2\epsilon, c+2\epsilon]\) or \(B_{2\delta}(u) \cap S = \emptyset\);
 \item if \(u \in S\) and \(\varphi(u) \le c+\epsilon\), then \(\varphi(\eta(u)) \le c-\epsilon\);
 \item for every \(u \in X\) it holds \(\norm{\eta(u)-u} \le \delta\).
\end{enumerate}
\end{proposition}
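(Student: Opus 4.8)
The plan is to prove Proposition~\ref{willemma} by the classical negative pseudo-gradient flow argument (this is essentially M.\ Willem's original proof): the hypothesis is precisely a uniform lower bound $\norm{\varphi'}_{X'}\ge 8\epsilon/\delta$ on a neighbourhood of the relevant ``danger set'', so a negative pseudo-gradient flow that has been cut off away from that set will decrease $\varphi$ quickly on $S\cap\varphi^{-1}([c-\epsilon,c+\epsilon])$ while displacing every point by at most $\delta$; its time-$\delta$ map will be the required homeomorphism $\eta$.

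First I would introduce the closed sets
\[
 A=\varphi^{-1}\bigl([c-2\epsilon,c+2\epsilon]\bigr)\cap\{u\in X:\dist_X(u,S)\le 2\delta\},\qquad
 B=\varphi^{-1}\bigl([c-\epsilon,c+\epsilon]\bigr)\cap\{u\in X:\dist_X(u,S)\le\delta\},
\]
so that the hypothesis reads $\norm{\varphi'(u)}_{X'}\ge 8\epsilon/\delta$ for all $u\in A$; in particular $\varphi'$ does not vanish on $A$ and $A$ is contained in the open set $U=\{u\in X:\varphi'(u)\ne 0\}$. Since $\varphi$ and $\dist_X(\cdot,S)$ are continuous and the inclusions $[c-\epsilon,c+\epsilon]\subset(c-2\epsilon,c+2\epsilon)$ and $\{\dist_X(\cdot,S)\le\delta\}\subset\{\dist_X(\cdot,S)<2\delta\}$ are strict (here $\epsilon,\delta>0$ is used), one checks that $B$ lies in the interior of $A$, whence
\[
 \psi(u)=\frac{\dist_X(u,X\setminus A)}{\dist_X(u,X\setminus A)+\dist_X(u,B)}
\]
is a well-defined locally Lipschitz function with $0\le\psi\le 1$, $\psi\equiv 1$ on $B$ and $\psi\equiv 0$ on $X\setminus A$ (in the degenerate case $A=\emptyset$ one simply has $\psi\equiv 0$ and everything below collapses to $\eta=\mathrm{id}$). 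I would then invoke the standard existence of a locally Lipschitz pseudo-gradient vector field $V\colon U\to X$ for $\varphi$, obtained from a locally finite partition of unity on the metric space $U$ and normalised so that $\norm{V(u)}\le 1$ and $\langle\varphi'(u),V(u)\rangle\ge\tfrac12\norm{\varphi'(u)}_{X'}$ for every $u\in U$.

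Next I would set $f=-\psi V$ on $A$ and $f\equiv 0$ on $X\setminus A$. Since $\supp\psi\subset A\subset U$, this $f$ is globally bounded with $\norm{f}\le 1$ and locally Lipschitz on all of $X$, so the Cauchy problem $\dot\sigma(t,u)=f(\sigma(t,u))$, $\sigma(0,u)=u$, has a unique solution defined for all $t\in\R$, $(t,u)\mapsto\sigma(t,u)$ is continuous, and each $\sigma(t,\cdot)$ is a homeomorphism of $X$ with inverse $\sigma(-t,\cdot)$. Put $\eta=\sigma(\delta,\cdot)$. Then $\norm{\eta(u)-u}\le\int_0^\delta\norm{f(\sigma(t,u))}\,dt\le\delta$, which is property~(iii); and $\tfrac{d}{dt}\varphi(\sigma(t,u))=-\psi(\sigma)\langle\varphi'(\sigma),V(\sigma)\rangle\le 0$, so $\varphi$ is non-increasing along trajectories and in particular $\varphi\circ\eta\le\varphi$. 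Finally, if $\varphi(u)\notin[c-2\epsilon,c+2\epsilon]$ or $B_{2\delta}(u)\cap S=\emptyset$, then $u\notin A$, hence $f(u)=0$, hence $\sigma(t,u)\equiv u$ by uniqueness, giving property~(i).

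The heart of the argument, and the step I expect to need the most care, is property~(ii). Let $u\in S$ with $\varphi(u)\le c+\epsilon$; by the monotonicity just established we may assume $\varphi(u)\ge c-\epsilon$, so $u\in B$. Suppose, for contradiction, that $\varphi(\eta(u))=\varphi(\sigma(\delta,u))>c-\epsilon$. Then monotonicity forces $\varphi(\sigma(t,u))\in(c-\epsilon,c+\epsilon]$ for all $t\in[0,\delta]$; moreover $\dist_X(\sigma(t,u),S)\le\dist_X(u,S)+\norm{\sigma(t,u)-u}\le 0+t\le\delta$, so $\sigma(t,u)\in B$ for every $t\in[0,\delta]$ and therefore $\psi(\sigma(t,u))=1$ throughout. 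Integrating and using $\langle\varphi'(\sigma),V(\sigma)\rangle\ge\tfrac12\norm{\varphi'(\sigma)}_{X'}\ge\tfrac12\cdot\tfrac{8\epsilon}{\delta}$ along the whole trajectory (legitimate since $\sigma(t,u)\in B\subset A$) gives
\[
 \varphi(\sigma(\delta,u))-\varphi(u)=-\int_0^\delta\langle\varphi'(\sigma(t,u)),V(\sigma(t,u))\rangle\,dt\le-\int_0^\delta\frac{4\epsilon}{\delta}\,dt=-4\epsilon,
\]
so $\varphi(\sigma(\delta,u))\le(c+\epsilon)-4\epsilon<c-\epsilon$, a contradiction; hence $\varphi(\eta(u))\le c-\epsilon$, which is~(ii). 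The delicate point is exactly the bookkeeping hidden in this last paragraph: the radii $\delta$ and $2\delta$, the levels $\epsilon$ and $2\epsilon$, the normalisation of the pseudo-gradient and the flow time $\delta$ must be calibrated so that, simultaneously, no trajectory starting in $B$ can leave $B$ before $\varphi$ has descended below $c-\epsilon$ and no trajectory is displaced by more than $\delta$; the constant $8\epsilon/\delta$ in the hypothesis is precisely the threshold that makes both requirements compatible.
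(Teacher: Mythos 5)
Your argument is sound, but note that the paper itself does not prove Proposition~\ref{willemma}: it is quoted as a known result, namely M.~Willem's quantitative deformation lemma \cite{WillemMinimax}*{Lemma 2.3}, so there is no internal proof to compare against. What you wrote is essentially a reconstruction of Willem's own argument: the sets $A$ and $B$, the Lipschitz quotient cut-off $\psi$, the cut-off locally Lipschitz pseudo-gradient field, the globally defined flow whose time-slices are homeomorphisms, and the dichotomy ``either the trajectory drops below $c-\eps$, or it stays in $B$ where $\psi\equiv 1$ and the slope bound $\norm{\varphi'}\ge 8\eps/\delta$ forces a descent of at least $2\eps$'' are exactly the ingredients of that proof. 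The only genuine difference is the calibration: Willem uses the field $-\psi\, v/\norm{v}^2$ (so the speed is at most $\delta/(8\eps)$) run for a time of order $\eps$, whereas you run a unit-speed pseudo-gradient for time $\delta$, obtaining a descent of $4\eps$; both are compatible with the constant $8\eps/\delta$, and your version even shows it is not sharp for this normalization. One hairline point to fix: you define $A$ with $\dist_X(\cdot,S)\le 2\delta$ and assert the slope bound on all of $A$, but the hypothesis is phrased via $B_{2\delta}(u)\cap S\ne\emptyset$, which (for the open ball) only covers $\dist_X(u,S)<2\delta$; on the slice $\dist_X(u,S)=2\delta$ the pseudo-gradient need not be defined and conclusion (i) is not automatically respected by your $\eta$. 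This is a boundary-convention issue rather than a substantive gap: it disappears with the closed-neighbourhood convention of \cite{WillemMinimax}, and in general it suffices to take $A:=\varphi^{-1}([c-2\eps,c+2\eps])\cap\{u:\dist_X(u,S)\le\tfrac32\delta\}$, which still contains a neighbourhood of $B$, lies where the hypothesis applies, and is avoided by every point that (i) requires to be fixed; the rest of your proof goes through verbatim.
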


\begin{proof}[Proof of Proposition~\ref{propositionAbstractSymmetricMinimax}]
Let $\gamma\in\Gamma$, $c>a$, $\eps\in ]0, \frac{c-a}{2}[$ and $\delta>0$ be as in the statement of 
Proposition~\ref{propositionAbstractSymmetricMinimax}.
Aiming to apply the quantitative deformation lemma, we set 
\begin{multline*}
  S:=\big\{ w \in \gamma(M):\, \text{for every \(u \in B_{2\delta}(w)\cap \varphi^{-1}([c-2\epsilon, c+2\epsilon])\),}\\
\text{one has \(\norm{\varphi'(u)}_{X'} \ge 8\epsilon/\delta\)} \big\}.
\end{multline*}
In turn, since $S$ fulfills the assumption of Proposition~\ref{willemma}, we get 
a continuous function \(\eta : X \to X\) such that $\varphi\circ\eta\leq\varphi$ which satisfies properties (i)-(iii). Setting 
\[
\Tilde{\gamma}:= \eta \circ \gamma\in\Gamma,
\]
observe that, by virtue of ii), if $t\in M$ and \( \varphi(\Tilde{\gamma}(t)) > c-\epsilon\), then \(\gamma(t) \not \in S\), namely
there exists \(u \in B_{2\delta}(\gamma(t))\) such that $c-2\epsilon\leq \varphi(u)\leq c+2\epsilon$ and 
\(\norm{\varphi'(u)}_{X'} < \frac{8\epsilon}{\delta}\). If we now set 
\[
  \Hat{\gamma}:=\Psi \circ \Tilde{\gamma}\in\Gamma,
\]
we claim that we can find elements \(v \in X\) and \(t \in M\) with the following properties:
\(c-2\epsilon \le \varphi(v) \le c+2\epsilon\),
\(\norm{v-\Hat{\gamma}(t)} \le 2\delta\), \(\varphi(\Hat{\gamma}(t)) > c- \epsilon\) 
and \(\norm{\varphi'(v)}_{X'} < 8 \epsilon /\delta\). 
In fact, if this was not the case,
the assumption of Proposition~\ref{willemma} would be fulfilled with the choice 
$S:=\Hat{\gamma}(M) \cap \varphi^{-1}([c-\frac{\epsilon}{2}, c+\epsilon])$.
We then get a deformation $\hat\eta:X\to X$ such that $\varphi\circ\hat\eta\leq\varphi$ which satisfies properties (i)-(iii). Given now an arbitrary 
element $\tau\in M$, either we have $\varphi(\hat\gamma(\tau))< c-\eps/2$ or 
$$
c-\eps/2\leq \varphi(\hat\gamma(\tau))=\varphi(\Psi(\tilde\gamma(\tau)))\leq \varphi(\tilde\gamma(\tau))\leq \varphi(\gamma(\tau))\leq c+\eps.
$$
In any case, by (ii) and since $\hat\eta\circ\hat\gamma\in\Gamma$
(by i), as $\varphi\circ\hat\gamma|_{M_0}=\varphi\circ\Psi\circ\eta\circ\gamma|_{M_0}\leq\varphi\circ\gamma|_{M_0}\leq a<c-2\eps$), 
$$
c\leq\sup_{M}\varphi(\hat\eta\circ\hat\gamma)\leq c-\eps/2,
$$
yielding a contradiction and proving the claim. Setting \(w := \Tilde{\gamma}(t)\in X\), since 
\(\varphi(\Tilde{\gamma}(t)) \ge \varphi(\Psi(w)) > c-\epsilon\), by the first part of the proof
there exists an element \(u\in X\) with the required properties a.1) and c.1). Furthermore, being
\(u \in B_{2\delta}(\gamma(t))\) and recalling iii), we get
$$
\|u-w\|\leq \|u-\gamma(t)\|+\|\eta(\gamma(t))-\gamma(t)\|\leq 2\delta+\delta,
$$
proving b.1). Analogously, inequalities b.2) and b.3) follow.
\end{proof}

\begin{remark}\rm
	\label{remnonsmooth}
The minimax principle stated in Proposition~\ref{propositionAbstractSymmetricMinimax} for $C^1$ smooth functionals
continues to hold for continuous functionals in the framework of the non-smooth
critical point theory developed in \cites{DM,CDM} by J.N.\ Corvellec, M.\ Degiovanni and M.\ Marzocchi, 
where the quantity $\|\varphi'(u)\|$ is replaced by the notion of weak slope $|d\varphi|(u)\in [0,+\infty]$ (see \cite{DM}*{Definition 2.1}). 
Precisely, the statement of Proposition~\ref{propositionAbstractSymmetricMinimax} in the continuous case
remains the same except the fact that the inequalities $\norm{\varphi'(u)}_{X'} < 8\epsilon/\delta$ and
$\norm{\varphi'(v)}_{X'} < 8\epsilon/\delta$ are replaced by $|d\varphi|(u)< 8\epsilon/\delta$ and 
$|d\varphi|(v)< 8\epsilon/\delta$, respectively. In \cite{corvellec}*{Theorem 2.3}
J.N.\ Corvellec derived a quantitative deformation lemma being the natural non-smooth counterpart of Proposition~\ref{willemma}.
Then, setting
\begin{multline*}
	  A:=\big\{ w \in \gamma(M):\, \text{for every \(u \in B_{2\delta}(w)\cap \varphi^{-1}([c-2\epsilon, c+2\epsilon])\)}\\
	\text{one has $|d\varphi|(u) \ge 8\epsilon/\delta$} \big\}.
\end{multline*}
	by applying \cite{corvellec}*{Theorem 2.3} to the set $A$ (or slightly modifying the argument if $A$ is not closed in $X$)
	the same conclusion in the first part of the proof of Proposition~\ref{propositionAbstractSymmetricMinimax} is obtained.
	In a similar fashion, also the second part of the proof can be proved reusing~\cite{corvellec}*{Theorem 2.3}.
	For applications of non-smooth critical point theory to various classes of quasi-linear elliptic PDEs, we refer the interested reader 
	to the monograph \cite{squasbook}. In the recent work \cite{Squassina2011-1} a symmetric minimax theorem is obtained for a class
	of lower semi-continuous functionals of the form $\varphi(u)=\int_\Omega j(u,|Du|)-\int_\Omega G(|x|,u)$.
\end{remark}

\section{Application to the Mountain Pass lemma}
\label{sectionMountain}
We will now apply the result of the previous section in order to 
prove the result announced in the introduction.

\begin{proposition}
	\label{mpl}
Assume that \(\varphi \in C^1(H^1_0(\Omega))\), \(0\) is a strict local minimum of \(\varphi\) which is not a global minimum  and define
\[
  \Gamma=\{ \gamma \in C([0, 1], H^1_0(\Omega)):\, \gamma(0) = 0 \text{ and } \varphi(\gamma(1)) < 0\}
\]
and
\[
  c=\inf_{\gamma \in \Gamma} \sup_{t\in [0,1]} \varphi (\gamma(t)).
\]
Assume that \(\varphi\) satisfies the Palais-Smale condition.
Let \(H\) be a closed half-space with \(\sigma_H (\Omega) = \Omega\) and for 
every \(u \in H^1_0(\Omega)\), \(\varphi (u^H) \le \varphi (u) \).
If \((\gamma_n)_{n \in \N}\) is a sequence in \(\Gamma\) such that
\[
  \limsup_{n \to \infty} \sup_{t\in [0,1]} \varphi(\gamma_n([0, 1])) \le c.
\]
then there exists \(u \in H^1_0(\Omega)\) such that \(\varphi (u) = \varphi (u^H) = c\), \(\varphi' (u) = \varphi' (u^H) = 0\) and
\[
 \liminf_{n \to \infty} \dist_{H^1} (u, \gamma_n([0, 1]) ) = 0 .
\]
\end{proposition}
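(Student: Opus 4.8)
The plan is to apply the abstract shadowing principle of Proposition~\ref{propositionAbstractSymmetricMinimax} repeatedly, with $X = H^1_0(\Omega)$, $M = [0,1]$, $M_0 = \{0,1\}$, $\Gamma_0$ the pairs $(\gamma(0),\gamma(1))$ with $\gamma(0)=0$ and $\varphi(\gamma(1))<0$, and $\Psi(u) = u^H$. One first checks the hypotheses: the polarization map $u \mapsto u^H$ is continuous on $H^1_0(\Omega)$ (a standard fact, essentially because polarization is non-expansive in $H^1$), it satisfies $\varphi \circ \Psi \le \varphi$ by assumption, and it maps $\Gamma$ into $\Gamma$ since $0^H = 0$ and $\varphi((\gamma(1))^H) \le \varphi(\gamma(1)) < 0$. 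Since $0$ is a strict local minimum that is not global, one has $c > 0 = a$ in the notation of the abstract statement (after possibly discarding finitely many $\gamma_n$ and noting $\varphi$ on $M_0$ is $\le 0 < c$).

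Next I would run the following extraction. Fix a sequence $\epsilon_k \downarrow 0$ with $\epsilon_k < (c-a)/2$, and choose a sequence $\delta_k \downarrow 0$ slowly enough that $\epsilon_k/\delta_k \to 0$ (e.g. $\delta_k = \sqrt{\epsilon_k}$). For each $k$, pick $n_k$ large so that $\sup_{[0,1]} \varphi \circ \gamma_{n_k} \le c + \epsilon_k$; this is possible by \eqref{ip-cond}. Apply Proposition~\ref{propositionAbstractSymmetricMinimax} with $\gamma = \gamma_{n_k}$, $\epsilon = \epsilon_k$, $\delta = \delta_k$ to obtain $u_k, v_k, w_k \in H^1_0(\Omega)$ with $\varphi(u_k), \varphi(v_k) \in [c - 2\epsilon_k, c + 2\epsilon_k]$, $\|u_k - w_k\| \le 3\delta_k$, $\dist(w_k, \gamma_{n_k}([0,1])) \le \delta_k$, $\|v_k - w_k^H\| \le 2\delta_k$, and $\|\varphi'(u_k)\|, \|\varphi'(v_k)\| < 8\epsilon_k/\delta_k$. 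Here I use that $\Psi(w_k) = w_k^H$, and I write b.3) as $\|v_k - w_k^H\| \le 2\delta$.

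Now I would pass to the limit using Palais--Smale. The sequence $(u_k)$ satisfies $\varphi(u_k) \to c$ and $\varphi'(u_k) \to 0$, so up to a subsequence $u_k \to u$ in $H^1_0(\Omega)$ with $\varphi(u) = c$ and $\varphi'(u) = 0$. From $\|u_k - w_k\| \le 3\delta_k \to 0$ we get $w_k \to u$, and since polarization is continuous (indeed non-expansive), $w_k^H \to u^H$; combined with $\|v_k - w_k^H\| \le 2\delta_k \to 0$ this gives $v_k \to u^H$. But $\varphi(v_k) \to c$ and $\varphi'(v_k) \to 0$, and $\varphi, \varphi'$ are continuous, so $\varphi(u^H) = c$ and $\varphi'(u^H) = 0$. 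Finally, $\dist_{H^1}(u, \gamma_{n_k}([0,1])) \le \|u - w_k\| + \dist(w_k, \gamma_{n_k}([0,1])) \le \|u - w_k\| + \delta_k \to 0$, which yields the liminf conclusion along the subsequence $(\gamma_{n_k})$, hence for the original sequence.

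The main obstacle is a bookkeeping one rather than a conceptual one: one must coordinate the three null sequences $\epsilon_k$, $\delta_k$, and the subsequence indices $n_k$ so that simultaneously $\epsilon_k/\delta_k \to 0$ (to make $\varphi'(u_k), \varphi'(v_k) \to 0$), $\delta_k \to 0$ (to make the shadowing estimates collapse), and $\sup \varphi \circ \gamma_{n_k} \le c + \epsilon_k$ (which forces $n_k$ to depend on $\epsilon_k$). A secondary point worth stating carefully is the continuity — in fact $1$-Lipschitz character — of $u \mapsto u^H$ on $H^1_0(\Omega)$, which is what transfers the convergence $w_k \to u$ to $w_k^H \to u^H$; this is where the hypothesis $\sigma_H(\Omega) = \Omega$ is used so that $u^H$ again vanishes on $\partial\Omega$. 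Everything else is a routine diagonal/subsequence argument.
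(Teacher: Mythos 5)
Your proposal is correct and follows essentially the same route as the paper: apply Proposition~\ref{propositionAbstractSymmetricMinimax} with $\Psi(u)=u^H$ along the given paths with parameters $\epsilon_n,\delta_n\to 0$ chosen so that $\epsilon_n/\delta_n\to 0$ (the paper takes $\epsilon_n=1/n^2$, $\delta_n=1/n$ after passing to a subsequence so that $\sup\varphi\circ\gamma_n\le c+1/n^2$), then use Palais--Smale on $(u_n)$ and the continuity of polarization to identify the limits of $(w_n)$ and $(v_n)$ as $u$ and $u^H$. Your bookkeeping with $\delta_k=\sqrt{\epsilon_k}$ and the indices $n_k$ is just a slightly different but equivalent parametrization of the same argument.
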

\begin{proof}
Notice that the map \(\Psi : H^1_0(\Omega) \to H^1_0(\Omega)\) defined by \(\Psi (u) := u^H\) is 
continuous by \citelist{\cite{VanSchaftingen2005}*{Proposition 2.5}\cite{VanSchaftingen2006}*{Corollary 2.40}}. 
By assumption, we have $\varphi\circ \Psi\leq \varphi$ and $\Psi(\gamma)\in\Gamma$, for all $\gamma\in\Gamma$, where 
$\Psi(\gamma)(t):=\Psi(\gamma(t))$ for $t\in [0,1]$. Without loss of generality, we can assume that 
\[
 \sup_{t\in [0,1]} \varphi(\gamma_n([0, 1])) \le c + \frac{1}{n^2}.
\]
Apply now Proposition~\ref{propositionAbstractSymmetricMinimax} with the choice \(M := [0, 1]\), \(M_0 := \{0, 1\}\),
\(\delta =\delta_n:= \frac{1}{n}\), \(\eps =\eps_n:= \frac{1}{n^2}\) and
\[
  \Gamma_0 := \{ \gamma_0 \in C(\{0, 1\}, H^1_0(\Omega)):\, \gamma_0(0) = 0 \text{ and } \varphi(\gamma_0(1)) < 0\}.
\]
One then obtains three sequences \( (u_n)_{n \in \N}\), \( (v_n)_{n \in \N}\) 
and \( (w_n)_{n \in \N}\) in \(H^1_0(\Omega)\) such that
\begin{gather*}
 \lim_{n\to \infty} \varphi (u_n) = \lim_{n\to \infty} \varphi (v_n) = c,\\
 \lim_{n\to \infty} \varphi' (u_n) = \lim_{n\to \infty} \varphi' (v_n) = 0,\\
 \lim_{n\to \infty} \norm{u_n - w_n}_{H^1} = \lim_{n\to \infty} \dist_{H^1} (w_n, \gamma_n([0, 1])) =\lim_{n\to \infty} \norm{v_n - w_n^H}_{H^1} = 0.
\end{gather*}
Since \(\varphi\) satisfies the Palais-Smale condition, up to a subsequence, \( (u_n)_{n \in \N}\) converges
to some \(u \in H^1_0(\Omega)\). Hence, the sequence \((w_n)_{n \in \N}\) also converges to \(u\). By continuity 
of the polarization, \((v_n)_{n \in \N}\)  converges to \(u^H\). The rest follow by the fact that $\varphi$ is $C^1(H^1_0(\Omega))$.
\end{proof}

\begin{remark}\rm
As pointed out in Remark~\ref{remnonsmooth}, the shadowing minimax principle 
in Proposition~\ref{propositionAbstractSymmetricMinimax} extends to the case
of continuous functionals in the framework of the non-smooth critical point theory of \cites{DM,CDM}
replacing $\|\varphi'(u)\|$ by the weak slope $|d\varphi|(u)$ \cite{DM}*{Definition 2.1}. In this setting,
the Palais-Smale condition has to be read as follows: if \((u_n)_{n \in \N}\) is a sequence in \(H^1_0(\Omega)\) such 
that \((\varphi(u_n))_{n \in \N}\) converges and \(|d\varphi|(u_n) \to 0\) as \(n \to \infty\), then 
\((u_n)_{n \in \N}\) converges strongly, up to a subsequence, to some $u$ in $H^1_0(\Omega)$.
Therefore, taking into account that $\varphi$ is continuous and
the map $H^1_0(\Omega)\ni u\mapsto |d\varphi|(u)\in [0,+\infty]$ is in turn lower
semi-continuous \cite{DM}*{Proposition 2.6}, Proposition~\ref{mpl} holds true for continuous functionals, with essentially the same proof, 
by replacing the conclusion that $\varphi'(u)=0$ and $\varphi'(u^H)=0$ with $|d\varphi|(u)=0$ and $|d\varphi|(u^H)=0$ respectively. 
For many continuous functionals of the Calculus of Variations this implies \cite{squasbook} that $u$ and $u^H$ are distributional solutions
of the associated Euler-Lagrange equation. 
\end{remark}

\begin{remark}\rm
Up to slight modifications, Proposition~\ref{mpl} holds also when the assumption
that the closed half-space \(H\) is axially symmetric, that is \(\sigma_H (\Omega) = \Omega\), is replaced by the more
general assumption that $0\in H$ and $\sigma^H(\Omega)=\Omega$, denoting $\sigma^H(\Omega)$ the polarized
domain of $\Omega$, namely the unique domain satisfying $\chi_{\sigma^H(\Omega)}=(\chi_\Omega)^H$.
If for instance $0\not\in\partial H$, then $\sigma_H (B_1(0)) \neq B_1(0)$ but instead $\sigma^H(B_1(0))=B_1(0)$.
\end{remark}

\vskip15pt
\noindent
{\bf Acknowledgments.} The first author was supported by the 2009 Italian PRIN project: {\em Metodi Variazionali e Topologici
nello Studio di Fenomeni non Lineari}. The work was partially done while the second author was visiting the University of Verona
during winter 2011.

\medskip
\begin{bibdiv}
\begin{biblist}

\bib{AmbrosettiRabinowitz}{article}{
   author={Ambrosetti, Antonio},
   author={Rabinowitz, Paul H.},
   title={Dual variational methods in critical point theory and
   applications},
   journal={J. Functional Analysis},
   volume={14},
   date={1973},
   pages={349--381},
}

\bib{BartschWethWillem}{article}{
   author={Bartsch, Thomas},
   author={Weth, Tobias},
   author={Willem, Michel},
   title={Partial symmetry of least energy nodal solutions to some
   variational problems},
   journal={J. Anal. Math.},
   volume={96},
   date={2005},
   pages={1--18},
   issn={0021-7670},
}

\bib{BrockSolynin2000}{article}{
   author={Brock, Friedemann},
   author={Solynin, Alexander Yu.},
   title={An approach to symmetrization via polarization},
   journal={Trans. Amer. Math. Soc.},
   volume={352},
   date={2000},
   number={4},
   pages={1759--1796},
   issn={0002-9947},
}

\bib{BrothersZiemer1988}{article}{
   author={Brothers, John E.},
   author={Ziemer, William P.},
   title={Minimal rearrangements of Sobolev functions},
   journal={J. Reine Angew. Math.},
   volume={384},
   date={1988},
   pages={153--179},
   issn={0075-4102},
}

\bib{BJMar}{article}{
   author={Byeon, J.},
   author={Jeanjean, L.},
   author={Maris, M.},
   title={Symmetry and monotonicity of least energy solutions},
   journal={Calc. Var. Partial Differential Equations},
   volume={36},
   date={2009},
   number={4},
   pages={481--492},
}

\bib{corvellec}{article}{
   author={Corvellec, J.N.},
   title={Quantitative deformation theorems and critical point theory},
   journal={Pacific J. Math.},
   volume={187},
   date={1999},
   number={2},
   pages={263--279},
}

\bib{CDM}{article}{
   author={Corvellec, J.N.},
author={Degiovanni, M.},
author={Marzocchi, M.},
   title={Deformation properties for continuous functionals and critical point theory},
   journal={Topol. Methods Nonlinear Anal.},
   volume={1},
   date={1993},
   number={1},
   pages={151--171},
}

\bib{DM}{article}{
author={Degiovanni, M.},
author={Marzocchi, M.},
   title={A critical point theory for nonsmooth functionals},
   journal={Ann. Mat. Pura Appl.},
   volume={167},
   date={1994},
   number={4},
   pages={73--100},
}

\bib{GidasNiNirenberg1979}{article}{
   author={Gidas, B.},
   author={Ni, Wei Ming},
   author={Nirenberg, L.},
   title={Symmetry and related properties via the maximum principle},
   journal={Comm. Math. Phys.},
   volume={68},
   date={1979},
   number={3},
   pages={209--243},
   issn={0010-3616},
}

\bib{GladialiPacellaWeth2010}{article}{
   author={Gladiali, Francesca},
   author={Pacella, Filomena},
   author={Weth, Tobias},
   title={Symmetry and nonexistence of low Morse index solutions in
   unbounded domains},
   journal={J. Math. Pures Appl. (9)},
   volume={93},
   date={2010},
   number={5},
   pages={536--558},
   issn={0021-7824},
}


\bib{Kawohl1985}{book}{
   author={Kawohl, Bernhard},
   title={Rearrangements and convexity of level sets in PDE},
   series={Lecture Notes in Mathematics},
   volume={1150},
   publisher={Springer-Verlag},
   place={Berlin},
   date={1985},
   pages={iv+136},
   isbn={3-540-15693-3},
}

\bib{Lopes1996}{article}{
   author={Lopes, Orlando},
   title={Radial symmetry of minimizers for some translation and rotation
   invariant functionals},
   journal={J. Differential Equations},
   volume={124},
   date={1996},
   number={2},
   pages={378--388},
   issn={0022-0396},
}

\bib{PacellaWeth}{article}{
   author={Pacella, Filomena},
   author={Weth, Tobias},
   title={Symmetry of solutions to semilinear elliptic equations via Morse
   index},
   journal={Proc. Amer. Math. Soc.},
   volume={135},
   date={2007},
   number={6},
   pages={1753--1762},
   issn={0002-9939},
}

\bib{PolyaSzego}{book}{
   author={P{\'o}lya, G.},
   author={Szeg{\H o}, G.},
   title={Isoperimetric Inequalities in Mathematical Physics},
   series={Annals of Mathematics Studies},
   volume = {27},
   publisher={Princeton University Press},
   place={Princeton, N.J.},
   date={1951},
   pages={xvi+279},
}

\bib{Squassina2011-1}{article}{
  author = {Squassina, M.},
  title = {Radial symmetry of minimax critical points for non-smooth functionals},
  journal = {Commun. Contemp. Math.},
  volume = {13},
  date = {2011},
 number={3},
pages={487--508},
}

\bib{Squassina2011}{article}{
  author = {Squassina, M.},
  title = {On Ekeland's variational principle},
  journal = {J. Fixed Point Theory Appl.},
  volume = {10},
  date = {2011},
number={1},
pages={191--195},
}

\bib{Squassina2012}{article}{
  author = {Squassina, M.},
  title = {Symmetry in variational principles and applications}, 
  journal = {J. London Math. Soc.},
  volume = {85},
  year = {2012},
  pages = {to appear},
}

\bib{squasbook}{book}{
    AUTHOR = {Squassina, M.},
     TITLE = {Existence, multiplicity, perturbation, and concentration
              results for a class of quasi-linear elliptic problems},
   series = {Electronic Journal of Differential Equations Monographs},
    volume = {7},
 PUBLISHER = {Texas State University--San Marcos, Department of Mathematics},
      YEAR = {2006},
     PAGES = {front matter+213 pp. (electronic)},
}

\bib{VanSchaftingen2005}{article}{
   author={Van Schaftingen, Jean},
   title={Symmetrization and minimax principles},
   journal={Commun. Contemp. Math.},
   volume={7},
   date={2005},
   number={4},
   pages={463--481},
   issn={0219-1997},
}

\bib{VanSchaftingen2006}{article}{
   author={Van Schaftingen, Jean},
   title={Approximation of symmetrizations and symmetry of critical points},
   journal={Topol. Methods Nonlinear Anal.},
   volume={28},
   date={2006},
   number={1},
   pages={61--85},
   issn={1230-3429},
}

\bib{VanSchaftingen2009}{article}{
   author={Van Schaftingen, Jean},
   title={Explicit approximation of the symmetric rearrangement by
   polarizations},
   journal={Arch. Math. (Basel)},
   volume={93},
   date={2009},
   number={2},
   pages={181--190},
   issn={0003-889X},
}

\bib{VanSchaftingenWillem}{article}{
   author={Van Schaftingen, Jean},
   author={Willem, Michel},
   title={Symmetry of solutions of semilinear elliptic problems},
   journal={J. Eur. Math. Soc. (JEMS)},
   volume={10},
   date={2008},
   number={2},
   pages={439--456},
   issn={1435-9855},
}

\bib{WillemMinimax}{book}{
   author={Willem, Michel},
   title={Minimax theorems},
   series={Progress in nonlinear differential equations and their applications}, 
   volume={24},
   publisher={Birkh\"auser}, 
   address={Boston -- Basel -- Berlin},
   date={1996},
}

\end{biblist}
\end{bibdiv}

\bigskip

\end{document}